\newtheorem{dfn}{Definition}[section]
\newtheorem{thm}[dfn]{Theorem}
\newtheorem{lem}[dfn]{Lemma}
\newtheorem{cor}[dfn]{Corollary}
\newtheorem{rem}[dfn]{Remark}
\newtheorem{claim}[dfn]{Claim}
\crefname{dfn}{Definition}{Definitions}
\crefname{thm}{Theorem}{Theorems}
\crefname{lem}{Lemma}{Lemmas}
\crefname{cor}{Corollary}{Corollaries}
\crefname{rem}{Remark}{Remarks}
\crefname{claim}{Claim}{Claims}
\renewcommand{\subset}{\subseteq}
\newcommand{\power}{\wp}
\newcommand{\uphar}{\mathbin{\upharpoonright}}
\DeclareMathOperator{\cf}{cf}
\DeclareMathOperator{\crit}{crit}
\DeclareMathOperator{\ord}{Ord}
\title{Determinacy in the Chang model}
\author{Takehiko Gappo and Grigor Sargsyan}
\date{February 13, 2023}
\subjclass[2020]{03E60, 03E45, 03E55}
\keywords{Determinacy, Inner Model Theory, Chang Model, Hod Pair.}
\begin{document}

\maketitle

\begin{abstract}
Assuming the existence of a certain hod pair with a Woodin cardinal that is a limit of Woodin cardinals, we show that the Chang model satisfies $\mathsf{AD}^+$ in any set generic extensions.
\end{abstract}

\section{Introduction}

The Chang model is the smallest inner model of $\mathsf{ZF}$ including all ordinals and closed under countable sequences.

\begin{dfn}
The Chang model is defined by
\[
\mathsf{CM}=\bigcup_{\alpha\in\ord}L({}^{\omega}\alpha).
\]
We also write $\mathsf{CM}\uphar\alpha=\bigcup_{\beta<\alpha}L({}^{\omega}\beta)$ for any ordinal $\alpha$.
\end{dfn}

By a result of Woodin, assuming the existence of a proper class of Woodin cardinals that are limits of Woodin cardinals, the theory of $\mathsf{CM}$ cannot be changed by forcing.\footnote{See \cite[Chapter 3]{StationaryTower} for the proof of this result.}
However, not much is known about this theory, and the model seems impenetrable. 

Nevertheless, there are two remarkable results on $\mathsf{CM}$.
First, in 90s, Woodin showed that assuming a proper class of Woodin cardinals that are limits of Woodin cardinals, $\mathsf{CM}\models\mathsf{AD}^+$.\footnote{This work will appear as \cite{DetGen}.}
As a corollary of the two aforementioned results of Woodin, we obtain that assuming the existence of a proper class of Woodin cardinals that are limits of Woodin cardinals, the statement $\mathsf{CM}\models\mathsf{AD}^+$ holds in any set generic extensions.

In another remarkable work, in \cite{small_sharp_for_Chang}, Mitchell, building on the forcing technology developed by Gitik, found a surprisingly small $\mathbb{R}$-mouse which is not in $\mathsf{CM}$.
The existence of such an $\mathbb{R}$-mouse follows from the existence of a proper class of strong cardinals.
The two aforementioned results provide tools for studying the internal structure of $\mathsf{CM}$. 

Here, our goal is to provide a third method for studying the internal structure of $\mathsf{CM}$.
Our method is perhaps more traditional: we identify a canonical iterable inner model such that $\mathsf{CM}$ can be generically added to it via genericity iterations.
This is similar to what is being done in inner model theory: e.g.\ $L(\mathbb{R})$ can be realized as the derived model, via $\mathbb{R}$-genericity iterations, of the minimal active mouse with infinitely many Woodin cardinals.
The following theorem, which will be proved as 
\cref{Det_in_Chang_in_any_gen_ext}, summarizes our work.

\begin{thm}
Let $(\mathcal{P}, \Sigma)$ be an excellent least branch hod pair\footnote{See \cite{comparison_mouse_pairs} for the definition of a least branch hod pair and see \cite{CoveringChang} for the definition of excellence.} such that
\begin{itemize}
    \item $\mathrm{Code}(\Sigma)$ is universally Baire, and
    \item $\mathcal{P}\models\mathsf{ZFC}$ $+$ ``there is a Woodin cardinal that is a limit of Woodin cardinals.''
\end{itemize}
Then $\mathsf{CM}\models\mathsf{AD}^+$ in any set generic extensions.
\end{thm}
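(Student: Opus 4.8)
The plan is to realize the Chang model by genericity iterations of $\mathcal{P}$, inside derived models, in the same spirit in which $L(\mathbb{R})$ is the derived model of the minimal active mouse with $\omega$ Woodin cardinals, and then to invoke the hod-mouse form of Woodin's analysis of determinacy in the Chang model to see that these derived models, and hence $\mathsf{CM}$ itself, satisfy $\mathsf{AD}^+$. The first reduction is to the ground model: the hypotheses of the theorem are robust under set forcing, since universal Baireness of $\mathrm{Code}(\Sigma)$ is preserved by set forcing, the canonical reinterpretation of $\Sigma$ in a set generic extension $V[G]$ is again an iteration strategy making $(\mathcal{P},\Sigma)$ an excellent least branch hod pair, and $\mathcal{P}$ --- being a fixed set --- still satisfies ``there is a Woodin cardinal that is a limit of Woodin cardinals''. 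So it suffices to prove $\mathsf{CM}\models\mathsf{AD}^+$ in $V$ itself, under the stated hypotheses, and then apply this inside each $V[G]$.

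Fix an ordinal $\lambda$; the goal is to realize the level $L({}^\omega\lambda)$ of $\mathsf{CM}$ inside a derived model. Let $\delta$ be the Woodin cardinal of $\mathcal{P}$ that is a limit of Woodin cardinals. Using $\Sigma$, first iterate $\mathcal{P}$ to an iterate $\mathcal{R}$ whose iteration embedding has critical point above $\lambda$, so that $\delta^{\mathcal{R}}:=\pi^{\mathcal{P}\mathcal{R}}(\delta)>\lambda$ is still a Woodin cardinal and a limit of Woodin cardinals of $\mathcal{R}$ --- all of them now above $\lambda$ --- while $\lambda$ survives as a cardinal of $\mathcal{R}$ below its least Woodin cardinal. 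Then, still in $V$, run a ${}^\omega\lambda$-genericity iteration of $\mathcal{R}$ via $\Sigma$: one builds a linear $\Sigma$-iteration with last model $\mathcal{S}$, and, writing $\delta^{\mathcal{S}}$ for the image of $\delta^{\mathcal{R}}$ in $\mathcal{S}$, feeds all of $({}^\omega\lambda)^V$ into the extender algebras at the Woodin cardinals of the intermediate models lying above $\lambda$, arranged so that a generic $g$ for the symmetric collapse of the interval of those Woodins up to $\delta^{\mathcal{S}}$ is read off from the iteration and
\[
({}^\omega\lambda)^*:=\bigcup_{\alpha<\delta^{\mathcal{S}}}\bigl({}^\omega\lambda\bigr)^{\mathcal{S}[g\uphar\alpha]}=({}^\omega\lambda)^V .
\]
Because the iteration fixes everything below $\lambda$ and never collapses $\lambda$, the symmetric union is well defined and, by the design of the bookkeeping, equals $({}^\omega\lambda)^V$. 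Letting $D(\mathcal{S})$ be the symmetric derived model of $\mathcal{S}$ at $\delta^{\mathcal{S}}$, built from $({}^\omega\lambda)^*$ together with the associated $\mathrm{Hom}^*$ sets, we get $L({}^\omega\lambda)^V=L\bigl(({}^\omega\lambda)^*\bigr)\subseteq D(\mathcal{S})$. Carrying this out coherently as $\lambda$ ranges over all ordinals --- so that the iterations and derived models form a directed system --- realizes $\mathsf{CM}^V$ inside the direct limit of the $D(\mathcal{S})$'s, as an inner model sharing its reals and each ${}^\omega\lambda$.

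It remains to see that the derived models satisfy $\mathsf{AD}^+$ and that this passes to $\mathsf{CM}^V$. For the former I would invoke the hod-mouse version of Woodin's Chang-model derived model theorem --- the analysis behind \cite{DetGen} --- which applies because $\mathcal{S}$ is an iterable mouse carrying a universally-Baire-coded iteration strategy and $\delta^{\mathcal{S}}$ is a Woodin cardinal that is a limit of Woodin cardinals; excellence of the hod pair is used to supply the mouse capturing, branch condensation, and $\mathrm{Hom}^*$-correctness that the construction requires. One then pushes $\mathsf{AD}^+$ down into $\mathsf{CM}^V$: winning strategies are reals and so transfer verbatim, while for $\infty$-Borel representations and for ordinal determinacy one uses Woodin's localization of $\infty$-Borel codes together with the homogeneity coming from $\mathrm{Hom}^*$ to ensure the required codes and strategies already lie in $\mathsf{CM}$ (using also $\mathsf{CM}\models\mathsf{DC}$, so that $\mathsf{DC}_{\mathbb{R}}$ is automatic). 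The main obstacle I anticipate is precisely this package: organizing the ${}^\omega\lambda$-genericity iteration so that the whole of $({}^\omega\lambda)^V$ --- not merely $\mathbb{R}^V$ --- is absorbed as the symmetric object over $\mathcal{S}$ while $\lambda$ itself is preserved, doing so coherently in $\lambda$, and transplanting Woodin's Chang-model determinacy argument from the hypothesis of a proper class of Woodin-limits-of-Woodins in $V$ to that of a single iterable hod mouse with such a cardinal. It is in this last point that the assumptions that $\mathrm{Code}(\Sigma)$ is universally Baire (for generic interpretability of the strategy) and that $(\mathcal{P},\Sigma)$ is excellent (for capturing and $\mathrm{Hom}^*$-correctness inside the derived model) are essential.
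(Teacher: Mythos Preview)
Your reduction to the ground model is correct and matches the paper. The core of the argument, however, has a genuine gap and misses the paper's key idea. The sentence ``iterate $\mathcal{P}$ to an iterate $\mathcal{R}$ whose iteration embedding has critical point above $\lambda$'' cannot be right: $\mathcal{P}$ is countable, so any iteration map out of it has countable critical point. What you presumably intend is to iterate until $\delta^{\mathcal{R}}>\lambda$, but then $\lambda$ is just some $V$-ordinal with no reason to be a cardinal of $\mathcal{R}$, and the subsequent ``${}^\omega\lambda$-genericity iteration'' that absorbs all of $({}^\omega\lambda)^V$ while fixing $\lambda$ is neither standard nor spelled out; the extender algebra makes \emph{reals} generic. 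Even if one grants all that, arranging the derived models into a coherent directed system as $\lambda$ varies, and then pushing $\mathsf{AD}^+$ (not merely $\mathsf{AD}$) down into $\mathsf{CM}^V$ by locating $\infty$-Borel codes and ordinal-game strategies there, are both substantial tasks that you leave at the level of intention.

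The paper replaces this entire programme with one device: a $\Sigma^2_1$ reflection theorem. Under the hypotheses, any $\Sigma^2_1$ sentence true in $\mathsf{CM}^V$ reflects to $(\mathsf{CM}\uphar\omega_2)^{\mathcal{P}[g]}$ for $g\subset\mathrm{Col}(\omega,<\delta)$ generic over $\mathcal{P}$. This is proved by a \emph{countable-length} $\mathbb{R}$-genericity iteration (after passing to a $\mathsf{CH}$ extension): one codes a witness $L_\beta({}^\omega\alpha)\models\phi$ into a set $A\subset\mathbb{R}$ and absorbs $A$; once $i(\delta)=\omega_1$ of the iterate's symmetric extension, any ${}^\omega\alpha$ with $\lvert\alpha\rvert=i(\delta)$ is coded by reals, so no exotic ${}^\omega\lambda$-genericity is needed. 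Separately, Sargsyan's $\mathsf{CDM}$ theorem (the hod-mouse result you gesture at) shows that $(\mathsf{CM}\uphar\omega_2)^{\mathcal{P}[g]}$ lies inside a model of $\mathsf{AD}^+$ with the same reals, hence itself satisfies $\mathsf{AD}^+$. Since $\neg\mathsf{AD}$ is $\Sigma^2_1$, and under $\mathsf{AD}$ so is $\neg\mathsf{AD}^+$, two applications of reflection give $\mathsf{CM}^V\models\mathsf{AD}^+$. The reflection step is precisely what substitutes for your attempt to realize all of $\mathsf{CM}^V$ inside a directed system of determinacy models.
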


The second author recently showed that the existence of an excellent least branch hod pair with a Woodin limit of Woodin cardinals is consistent relative to a measurable cardinal above a Woodin limit of Woodin cardinals, but the proof has not been written up yet.
We conjecture that some large cardinal hypothesis directly implies the existence of such a least branch hod pair.

The main theorem in this paper is a direct consequence of $\Sigma^2_1$ reflection proved as \cref{Sigma^2_1-reflection}, together with the main theorem of \cite{CoveringChang} stated as \cref{Chang_over_DM} in this paper.
The $\Sigma^2_1$ reflection is a completely fine-structure free result and our proof does not require any deep knowledge of inner model theory.
One can find in \cite{Farah} all definitions of basic notions used in this paper.

\subsection*{Acknowledgments}

The first author thanks Daisuke Ikegami to tell him about Woodin's old results on the Chang model.
The first author thanks John Steel for offering insightful remarks and comments when the first author gave talks on this topic at Core Model Seminar in January 2023.
The first author is grateful to Ernest Schimmerling and Benjamin Siskind for organizing the seminar and affording him the opportunity to present his work.
The first author was supported by the Elise Richter grant number V844 and the International Project grant number I6087 of the Austrian Science Fund (FWF). The second author was supported by NCN Grant WEAVE-UNISONO, Id: 567137.

\section{$\Sigma^2_1$ reflection of the Chang model}

\begin{thm}\label{Sigma^2_1-reflection}
Suppose that $V$ is closed under sharps.
Let $M$ be a countable transitive model of $\mathsf{ZFC}$.
Let $\vec{E}\in M$ be a sequence of extenders in $M$ and let $\delta$ be a cardinal of $M$ such that
\[
M\models\text{``$\delta$ is a Woodin cardinal that is a limit of Woodin cardinals witnessed by $\vec{E}$.''}
\]
Suppose that $(M, \vec{E})$ has an $\omega_1$-iteration strategy\footnote{In this paper, iterability always means normal iterability.} $\Sigma$ such that $\mathrm{Code}(\Sigma)$ is $<(2^{\aleph_0})^+$-universally Baire.
Let $g\subset\mathrm{Col}(\omega, <\delta)$ be $M$-generic.
Then, for any $\Sigma^2_1$-sentence $\phi$,
\[
\mathsf{CM}\models\phi\Longrightarrow (\mathsf{CM}\uphar\omega_2)^{M[g]}\models\phi.
\]
\end{thm}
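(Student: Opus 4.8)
The plan is to reflect a witness for $\phi$ down along a genericity iteration of $M$ driven by $\Sigma$, and then to transfer the conclusion from an iterate of $M$ back to $M$ itself by homogeneity of the L\'evy collapse. Write $\phi\equiv\exists A\subseteq\mathbb{R}\,\psi(A)$, where $\psi$ has quantifiers ranging only over reals together with the unary predicate $A$ (and possibly a real parameter). Since $\mathbb{R}^{\mathsf{CM}}=\mathbb{R}$, for any $A\in\mathsf{CM}$ the truth value of $\psi(A)$ is computed the same way in $\mathsf{CM}$, in $V$, and in any inner model containing $A$ and all reals; this absoluteness of the matrix is what makes only the \emph{witness}, and not the matrix, the real issue. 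First I would carry out a L\"owenheim--Skolem reduction. Fix a witness $A$ for $\phi$ in $\mathsf{CM}$ and data defining it: ordinals $\alpha,\beta$, a sequence $s\in{}^{\omega}\alpha$, and a formula $\varphi_0$ with $A=\{x\in\mathbb{R}:L_{\beta}({}^{\omega}\alpha)\models\varphi_0(x,s)\}$. By first passing to a hull of $L_{\beta}({}^{\omega}\alpha)$ of size $2^{\aleph_0}$ containing $\mathbb{R}$ and collapsing, we may assume $\alpha,\beta<(2^{\aleph_0})^{+}$ (this is where that bound on the universal Baireness of $\mathrm{Code}(\Sigma)$ will be needed). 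Now pick $\theta$ large with all this data plus $M,\vec E,\mathrm{Code}(\Sigma)$ in $V_{\theta}$, take a countable $X\prec V_{\theta}$ containing them with $M\subseteq X$, and let $\sigma\colon\bar X\to V_{\theta}$ be the inverse transitive collapse. Then $\sigma$ fixes $M,\vec E,\delta$ and all reals of $\bar X$; $\bar C:=\mathsf{CM}^{\bar X}$ is a countable transitive model with $\bar C\models\phi$ witnessed by $\bar A:=\sigma^{-1}(A)$, where $(\mathbb{R}^{\bar X},\bar A)\models\psi$ and $\bar A$ is defined over $L_{\bar\beta}(({}^{\omega}\bar\alpha)^{\bar X})$ from $\bar s$ by $\varphi_0$; and, because $\mathrm{Code}(\Sigma)$ is $<(2^{\aleph_0})^{+}$-universally Baire, $\bar\Sigma:=\sigma^{-1}(\Sigma)$ is a genuine $\omega_1$-strategy for $(M,\vec E)$ which extends canonically in $V$ to a strategy agreeing with $\Sigma$ on trees lying in $\bar X$.

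Next I would run, inside $V$ and using the extended strategy, a countable genericity iteration $\pi\colon M\to N$ together with an $N$-generic $h\subseteq\mathrm{Col}(\omega,<\pi(\delta))$ arranged so that $N[h]$ absorbs the countable picture $\bar C$. Let $\gamma_0=\ord\cap\bar C$. Using extenders of $M$ with critical point at the images of $\delta$, first move the image of $\delta$ above $\gamma_0$; since $\pi(\delta)$ remains a Woodin cardinal that is a limit of Woodins in $N$, there is still a Woodin $\xi$ of $N$ with $\gamma_0<\xi<\pi(\delta)$. Using the extender algebra at $\xi$ and at the Woodins below $\pi(\delta)$, and interleaving the L\'evy collapse with the usual diagonal bookkeeping, one makes generic over $N$: every real of $\bar X$, together with the datum $\bar s$ and the (in $V$, countable) collection $({}^{\omega}\bar\alpha)^{\bar X}$, hence the whole structure $L_{\bar\beta}(({}^{\omega}\bar\alpha)^{\bar X})$ — while keeping the symmetric reals of $h$ controlled tightly enough that the matrix $\psi$, which holds of $(\mathbb{R}^{\bar X},\bar A)$, still holds of $\bigl(\mathbb{R}^{N[h]},\sigma\text{-image of the witness}\bigr)$ (in the cleanest arrangement, $\mathbb{R}^{N[h]}=\mathbb{R}^{\bar X}$, so the two structures literally coincide). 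Since $\bar\alpha,\bar\beta<\gamma_0<\pi(\delta)=\omega_1^{N[h]}$ and the relevant objects have been made generic, $L_{\bar\beta}(({}^{\omega}\bar\alpha)^{\bar X})$ is coded in $N[h]$ by an $\omega$-sequence of ordinals below $\omega_1^{N[h]}$, so it lies in $(\mathsf{CM}\uphar\omega_2)^{N[h]}=L({}^{\omega}\omega_1^{N[h]})^{N[h]}$; therefore so does $B:=\{x\in\mathbb{R}^{N[h]}:L_{\bar\beta}(({}^{\omega}\bar\alpha)^{\bar X})\models\varphi_0(x,\bar s)\}$, and $B\subseteq\mathbb{R}^{N[h]}$.

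Given all this, $(\mathbb{R}^{N[h]},B)\models\psi$ by the matrix absoluteness together with the elementarity of $\sigma$ on reals, so $(\mathsf{CM}\uphar\omega_2)^{N[h]}\models\psi(B)$ and hence $(\mathsf{CM}\uphar\omega_2)^{N[h]}\models\phi$. Finally, since $\mathrm{Col}(\omega,<\pi(\delta))$ is homogeneous in $N$ and the statement ``$(\mathsf{CM}\uphar\omega_2)\models\phi$'' is parameter-free, $\mathbf{1}$ forces it over $N$; arranging the iteration not to drop on its main branch so that $\pi\colon M\to N$ is fully elementary with $\pi(\mathrm{Col}(\omega,<\delta)^{M})=\mathrm{Col}(\omega,<\pi(\delta))^{N}$, we pull this back to get that $\mathbf{1}$ forces ``$(\mathsf{CM}\uphar\omega_2)\models\phi$'' over $M$, so $(\mathsf{CM}\uphar\omega_2)^{M[g]}\models\phi$ for the given $g$. (Closure of $V$ under sharps enters throughout as the background absoluteness guaranteeing that the universally Baire code of $\Sigma$ keeps computing a strategy through the auxiliary collapses, and that the relevant iterations exist and are wellfounded.)

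The step I expect to be the main obstacle is the genericity iteration of the second paragraph: one must capture, over a \emph{single} iterate $N[h]$, simultaneously (i) enough of the symmetric reals that the matrix $\psi$ transfers verbatim, and (ii) the ordinal-sequence data coding the witness, so that the witness lands inside the Chang model $(\mathsf{CM}\uphar\omega_2)^{N[h]}$ and not merely inside $N[h]$. These pull against each other — pinning the reals down to $\mathbb{R}^{\bar X}$ leaves no room in $N[h]$ for a real coding objects that $\bar X$ regards as uncountable, so the witness must be imported as an $\omega$-sequence of ordinals below $\omega_1^{N[h]}=\pi(\delta)$ rather than as a real — and reconciling them is exactly where the hypothesis that $\delta$ is a Woodin limit of Woodins does its work: it supplies cofinally many Woodins in the iterates, so that the countably many reals of $\bar X$ can be realized one at a time while Woodins above $\gamma_0$ remain available to absorb the defining sequences of the witness.
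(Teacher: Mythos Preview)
Your outline has the right global architecture---reflect a witness into an iterate, then pull back by homogeneity of the collapse and elementarity of the iteration map---but the genericity-iteration step contains a gap that you correctly flag in your last paragraph and then do not actually close. You want simultaneously $\mathbb{R}^{N[h]}=\mathbb{R}^{\bar X}$ (so that the matrix $\psi$ transfers verbatim) and $L_{\bar\beta}(({}^{\omega}\bar\alpha)^{\bar X})\in N[h]$ (so that your witness $B$ lands in $(\mathsf{CM}\uphar\omega_2)^{N[h]}$). These are incompatible: since $\bar\alpha<\pi(\delta)=\omega_1^{N[h]}$, inside $N[h]$ every $\omega$-sequence of ordinals below $\pi(\delta)$ is interdefinable with a real via a bijection $\omega\to\bar\alpha$ in $N[h]$. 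Hence any code for the set $({}^{\omega}\bar\alpha)^{\bar X}$---however you package it---produces a real of $N[h]$ that cannot lie in $\mathbb{R}^{\bar X}$, because $\bar X$ believes $({}^{\omega}\bar\alpha)^{\bar X}$ has size continuum. Your proposed fix ``import the witness as an $\omega$-sequence of ordinals rather than as a real'' therefore does not escape the obstruction. There is a second problem: even granting the import, $({}^{\omega}\bar\alpha)^{\bar X}$ has no reason to equal $({}^{\omega}\bar\alpha)^{N[h]}$, so $L_{\bar\beta}(({}^{\omega}\bar\alpha)^{\bar X})$ would sit inside $(\mathsf{CM}\uphar\omega_2)^{N[h]}$ only as an opaque parameter, not as an initial segment of the Chang hierarchy there.

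The paper's argument avoids this by \emph{not} transplanting a fixed countable witness. After forcing $\mathsf{CH}$, it codes a bijection $f\colon\omega_1\to\alpha_0$ together with the structure $L_{\beta_0}({}^{\omega}\alpha_0)$ into a single $A\subseteq\mathbb{R}$, and uses $(A,\mathbb{R})^{\#}$---this is where closure under sharps is actually used, not for strategy absoluteness---to obtain an iterate $i\colon M\to N$ with $L(A\uphar i(\delta),\mathbb{R}\uphar i(\delta))\prec L(A,\mathbb{R})$. That elementarity manufactures a \emph{new} witness $(\alpha,\beta)$ for which $f\uphar i(\delta)\colon i(\delta)\to\alpha$ is a bijection coded by $A\uphar i(\delta)\in N[g]$; because this bijection lives in the iterate and has domain $i(\delta)=\omega_1^{N[h]}$, one can show directly that $({}^{\omega}\alpha)^{L(A\uphar i(\delta),\mathbb{R}\uphar i(\delta))}=({}^{\omega}\alpha)^{N[h]}$. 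Thus $L_{\beta}({}^{\omega}\alpha)$ is literally the $N[h]$-version of the structure, already inside $(\mathsf{CM}\uphar\omega_2)^{N[h]}$, and there is nothing foreign to import.
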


\begin{proof}
Suppose that $\mathsf{CM}\models\phi$.
Let $\alpha_0, \beta_0\geq\omega$ be such that $L_{\beta_0}({}^{\omega}{\alpha_0})\models\phi$.
By Skolem hull argument, we may assume that $\lvert\max\{\alpha_0, \beta_0\}\rvert\leq 2^{\aleph_0}$.
Now we force $\mathsf{CH}$ via $\mathrm{Add}(\omega_1, 1)$.
As this is a countable closed forcing of size continuum, $\mathsf{CM}$ does not change and the assumption of the theorem is preserved.
Therefore, we may assume that $\mathsf{CH}$ holds in $V$.
Let $A\subset\mathbb{R}$ code a bijection $f\colon\omega_1\to\alpha_0$ and the structure $L_{\beta_0}({}^{\omega}\alpha_0)$.
Note that $(A, \mathbb{R})^{\#}$ exists.

By $\mathsf{CH}$, we can fix an enumeration $\langle r_{\xi}\mid\xi<\omega_1\rangle$ of $\mathbb{R}$.
For any $\gamma<\omega_1$ and $B\subset\mathbb{R}$, we write $B\uphar\gamma=B\cap\{r_{\xi}\mid\xi<\gamma\}$.
We make use of Woodin's extender algebra at a Woodin cardinal $\delta$ with $\delta$ many generators defined in $(M, \vec{E})$, denoted by $\mathsf{EA}_{\delta}^{M, \vec{E}}$.
We use the fact that $M\models$``$\mathsf{EA}_{\delta}^{M, \vec{E}}$ has $\delta$-c.c.'' and the following lemma.

\begin{lem}[{\cite[Theorem 4.6]{Farah}}]\label{extender_algebra}
Let $M$ be a countable transitive model of $\mathsf{ZFC}$.
Let $\vec{E}\in M$ be a sequence of extenders in $M$ and let $\delta$ be a cardinal of $M$ such that
\[
M\models\text{``$\delta$ is a Woodin cardinal that is a limit of Woodin cardinals witnessed by $\vec{E}$.''}
\]
Suppose that $(M, \vec{E})$ is $\omega_1+1$-iteration straetgy $\Sigma$.
Also, let $A\subset\mathbb{R}$ be such that $(A, \mathbb{R})^{\#}$ exists.
Then there is an iteration map $i\colon M\to N$ via a normal iteration tree $\mathcal{T}$ on $(M, \vec{E})$ of countable length according to $\Sigma$, and $N$-generic $g\subset i(\mathsf{EA}^{M, \vec{E}}_{\delta})$ such that 
\begin{enumerate}
    \item $L(A\uphar i(\delta), \mathbb{R}\uphar i(\delta))\prec L(A, \mathbb{R})$,
    \item $A\uphar i(\delta)\in N[g]$,
    \item $\mathbb{R}^{N[g]}=\mathbb{R}\upharpoonright i(\delta)$,
    \item for any $x\in\mathbb{R}^{N[g]}$, there is a poset $\mathbb{P}\in V^N_{i(\delta)}$ such that $x$ is $\mathbb{P}$-generic over $N$, and
    \item $i(\delta)=\omega_1^{N[g]}=\sup\{\omega_1^{N[x]}\mid x\in\mathbb{R}^{N[g]}\}$.
\end{enumerate}
\end{lem}
\begin{rem}
We need $(A, \mathbb{R})^{\#}$ only for the first clause.
\end{rem}

Note that our $(M, \vec{E})$ is $\omega_1+1$-iterable by $<(2^{\aleph_0})^{+}$-universally Baireess of its iteration strategy.
We apply \cref{extender_algebra} to our $A$ to get an elementary embedding $i\colon M\to N$ and $N$-generic $g\subset i(\mathsf{EA}^{M, \vec{E}}_{\delta})$ satisfying the above clauses (1)--(5).
By (1), $\mathbb{R}^{L(A\uphar i(\delta), \mathbb{R}\upharpoonright i(\delta))}=\mathbb{R}\upharpoonright i(\delta)$ and
\begin{multline*}
L(A\uphar i(\delta), \mathbb{R}\upharpoonright i(\delta))\models \exists\alpha, \beta\; [A\uphar i(\delta)\text{ codes the bijection } f\uphar i(\delta)\colon i(\delta)\to\alpha\\
\text{ and the model }L_{\beta}({}^{\omega}\alpha)\text{ of }\phi].
\end{multline*}
We fix ordinals $\alpha$ and $\beta$ witnessing this.

\begin{claim}\label{first_claim}
$({}^{\omega}\alpha)^{L(A\uphar i(\delta), \mathbb{R}\upharpoonright i(\delta))}=({}^{\omega}\alpha)^{N[g]}$.
\end{claim}
\begin{proof}
Since $A\uphar i(\delta), \mathbb{R}\uphar i(\delta)\in N[g]$ by (2) and (3), $({}^{\omega}\alpha)^{L(A\uphar i(\delta), \mathbb{R}\upharpoonright i(\delta))}\subset N[g]$.
To show that $({}^{\omega}\alpha)^{N[g]}\subset L(A\uphar i(\delta), \mathbb{R}\uphar i(\delta))$, let $x\in({}^{\omega}\alpha)^{N[g]}$.
Note that $f\uphar i(\delta)\colon i(\delta)\to\alpha$ is the bijection coded by $A\uphar i(\delta)$. 
Set
\[
x_f:=\langle f^{-1}(x(n)) \mid n<\omega\rangle\in ({}^{\omega}i(\delta))^{N[g]}.
\]
Since $i(\delta)=\omega_1^{N[g]}$ by (5), $x_f$ can be coded into a real in $N[g]$.
As $\mathbb{R}^{N[g]}=\mathbb{R}\upharpoonright i(\delta)$ by (3), $x_f$ is in $L(A\uphar i(\delta), \mathbb{R}\upharpoonright i(\delta))$.
Therefore, $x\in L(A\uphar i(\delta), \mathbb{R}\upharpoonright i(\delta))$.
\end{proof}

\begin{claim}\label{second_claim}
$({}^{\omega}\alpha)^{N[g]}=({}^{\omega}\alpha)^{N(\mathbb{R}\upharpoonright i(\delta))}$.
\end{claim}
\begin{proof}
We repeat almost the same proof of \cref{first_claim}.
It is enough to show that $({}^{\omega}\alpha)^{N[g]}\subset N(\mathbb{R}\upharpoonright i(\delta))$, so let $x\in ({}^{\omega}\alpha)^{N[g]}$.
Because $N[g]\models\lvert\alpha\rvert=i(\delta)$ and $N\models$``$i(\mathsf{EA}^{M, \vec{E}}_{\delta})$ has the $i(\delta)$-c.c.,'' $N\models\lvert\alpha\rvert=i(\delta)$.
Let $\overline{f}\colon i(\delta)\to\alpha$ be a bijection in $N$ and set
\[
x_{\overline{f}}:=\langle \overline{f}^{-1}(x(n)) \mid n<\omega\rangle\in {}^{\omega} i(\delta).
\]
Since $i(\delta)=\omega_1^{N[g]}$, $x_{\overline{f}}$ can be coded into a real in $N[g]$.
As $\mathbb{R}^{N[g]}=\mathbb{R}\uphar i(\delta)$ by (3), $x_{\overline{f}}\in N(\mathbb{R}\uphar i(\delta))$ and thus $x\in N(\mathbb{R}\uphar i(\delta))$.
\end{proof}

Now we use characterization of the symmetric reals in a collapsing extension.
We say that $X\subset\mathbb{R}$ is closed under finite sequences if for each finite subset of $X$, there is a single real in $X$ that recursively codes it.

\begin{lem}[{\cite[Lemma 3.1.5]{StationaryTower}}]\label{symmetric_reals}
Let $N$ be a transitive model of $\mathsf{ZFC}$ and let $\eta$ be a strong limit cardinal in $N$.
Let $X\subset\mathbb{R}$ be a set of reals closed under finite sequences such that each real in $X$ is generic over $N$ and $X=\mathbb{R}\cap N(X)$.
Then $X=\mathbb{R}^*_h :=\bigcup_{\xi<\delta}\mathbb{R}^{N[h\uphar\xi]}$ for some $N$-generic $h\subset\mathrm{Col}(\omega, <\eta)$ if and only if the following hold:
\begin{itemize}
    \item for any $x\in X$, there is a poset $\mathbb{P}\in V^{N}_{\eta}$ such that $x$ is $\mathbb{P}$-generic over $N$, and
    \item $\eta=\sup\{\omega_1^{N[x]}\mid x\in X\}$.
\end{itemize}
\end{lem}

We apply this lemma to $X=\mathbb{R}\uphar i(\delta)$ and $\eta=i(\delta)$.
The assumption on $X$ holds in our case because $\mathbb{R}\uphar i(\delta) = \mathbb{R}^{N(\mathbb{R}\uphar i(\delta))}=\mathbb{R}^{N[g]}$ by (3).
By (4) and (5), there is an $N$-generic $h\subset\mathrm{Col}(\omega, <i(\delta))$ such that $\mathbb{R}\uphar i(\delta) = \mathbb{R}^*_h = \mathbb{R}^{N[h]}$.

\begin{claim}\label{third_claim}
$({}^{\omega}\alpha)^{N(\mathbb{R}\upharpoonright i(\delta))}=({}^{\omega}\alpha)^{N[h]}$.
\end{claim}
\begin{proof}
Since $\mathbb{R}^{N[h]}=\mathbb{R}\upharpoonright i(\delta)$, the same proof as \cref{second_claim} works out.
\end{proof}

By \cref{first_claim,second_claim,third_claim}, $(L_{\beta}({}^{\omega}\alpha))^{N[h]}=(L_{\beta}({}^{\omega}\alpha))^{L(A\uphar i(\delta), \mathbb{R}\upharpoonright i(\delta))}$, which is a structure coded by $A\uphar i(\delta)$.
Therefore, $(L_{\beta}({}^{\omega}\alpha))^{N[h]}\models\phi$.
By the homogeneity of $\mathrm{Col}(\omega, <i(\delta))$,
\[
N\models \exists\alpha, \beta < i(\delta)^+ \; \Vdash_{\mathrm{Col}(\omega, <i(\delta))} \text{``}L_{\beta}({}^{\omega}\alpha)\models\phi.\text{''}
\]
By elementarity of $i\colon M\to N$,
\[
M\models \exists\alpha, \beta < \delta^+ \; \Vdash_{\mathrm{Col}(\omega, <\delta)}\text{``}L_{\beta}({}^{\omega}\alpha)\models\phi.\text{''}
\]
Therefore, for any $M$-generic $g\subset\mathrm{Col}(\omega, <\delta)$, we have $(\mathsf{CM}\uphar\omega_2)^{M[g]}\models\phi$.
\end{proof}

\section{Determinacy in the Chang model}

First, recall the derived model theorem due to Woodin.
Suppose that $\delta$ is a limit of Woodin cardinals and let $g\subset\mathrm{Col}(\omega, <\delta)$ be $V$-generic.
In $V[g]$, we let $\mathbb{R}^*_g = \bigcup_{\xi<\delta} \mathbb{R}^{V[g\vert\xi]}$ and
\[
\Gamma^*_g = \{A^*_g\subset\mathbb{R}^*_g\mid\exists\xi<\delta( A\subset\mathbb{R}^{V[g\uphar\xi]}\land V[g\uphar\xi]\models\text{``$A$ is $<\delta$-universally Baire''})\}.
\]
Here, we write $A^*_g = \bigcup_{\xi<\eta<\delta} A^{g\uphar\eta}$, where $A^{g\uphar\eta}$ is the canonical extension of $A$ in $V[g\uphar\eta]$ via its $<\delta$-universally Baire representation.
The model $L(\Gamma^*_g, \mathbb{R}^*_g)$ is called the derived model at $\delta$ computed in $V[g]$.
Woodin showed that $L(\Gamma^*_g, \mathbb{R}^*_g)\models\mathsf{AD}^+$.\footnote{See \cite{derived_model_theorem} for the proof of this.}

In \cite{CoveringChang}, the second author constructed a new model of determinacy extending a derived model in a collapsing extension of a hod mouse.

\begin{thm}[Sargsyan, \cite{CoveringChang}]\label{Chang_over_DM}
Suppose that $(\mathcal{P}, \Sigma)$ is an excellent least branch hod pair such that $\mathcal{P}\models\mathsf{ZFC}$ and that $\delta$ is a limit of Woodin cardinals such that either $\delta$ is regular or $\cf(\delta)$ is not a measurable cardinal in $\mathcal{P}$.
Let $g\subset\mathrm{Col}(\omega, <\delta)$ be $\mathcal{P}$-generic.
Let $L(\Gamma^*_g, \mathbb{R}^*_g)$ be the derived model at $\delta$ computed in $\mathcal{P}[g]$.
Then, in $\mathcal{P}[g]$, there is a transitive model $M$ of $\mathsf{ZFC}-\mathsf{Power}$ such that
\begin{itemize}
    \item $M\cap\ord=\omega_2=(\delta^+)^{\mathcal{P}}$,
    \item $M$ has a largest cardinal $\kappa$ such that if $\cf^{\mathcal{P}}(\delta)>\omega$ then $\mathrm{cf}^{\mathcal{P}[g]}(\kappa)>\omega$,
    \item $L(M, \cup_{\alpha<\kappa}{}^{\omega}\alpha, \Gamma^*_g, \mathbb{R}^*_g)\cap\power(\mathbb{R})=\Gamma^*_g$, and
    \item $L(M, \cup_{\alpha<\kappa}{}^{\omega}\alpha, \Gamma^*_g, \mathbb{R}^*_g)\models\mathsf{AD}^+$.\footnote{This follows from the third clause by the derived model theorem.}
\end{itemize}
\end{thm}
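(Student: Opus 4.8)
The plan is to build $M$ inside $\mathcal{P}[g]$ by a covering/hull argument over the hod-premouse hierarchy of $\mathcal P$ and then verify the four clauses; by the footnote the fourth follows from the third via Woodin's derived model theorem, so the real content is the third.

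\textbf{The model $M$ and the first two clauses.} Work in $\mathcal{P}[g]$. Since $(\mathcal P,\Sigma)$ is a least branch hod pair, $\mathcal P$ carries a canonical hierarchy of hod-premice; let $\mathcal P^-=\mathcal P\uphar(\delta^+)^{\mathcal P}$ be its initial segment of height $(\delta^+)^{\mathcal P}$. The poset $\mathrm{Col}(\omega,<\delta)$ has size $\delta$, hence preserves $(\delta^+)^{\mathcal P}$, and when $\delta$ is regular in $\mathcal P$ it is $\delta$-c.c., so that $(\delta^+)^{\mathcal P}=\omega_2^{\mathcal{P}[g]}$. The natural candidate for $M$ is the transitive collapse of an $\omega$-closed (computed in $\mathcal{P}[g]$) elementary substructure of a large rank-initial-segment of $\mathcal{P}[g]$ containing $\mathcal P^-$, $\Gamma^*_g$ and $\mathbb{R}^*_g$, re-presented so as to have height exactly $(\delta^+)^{\mathcal P}$ and a largest cardinal $\kappa$; concretely, $M$ is the ``$\omega$-closure of $\mathcal P^-$ inside $(\delta^+)^{\mathcal P}$'', with $\kappa$ the image of $\delta$ when $\delta$ is regular in $\mathcal P$ (so $\delta$ is not collapsed below $(\delta^+)^{\mathcal P}$) and the image of an appropriate cofinal object in $\delta$ otherwise. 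Then $M\models\mathsf{ZFC}-\mathsf{Power}$ and $M\cap\ord=\omega_2=(\delta^+)^{\mathcal P}$ because the hull is cofinal in $(\delta^+)^{\mathcal P}$, and $\kappa$ is largest in $M$ by construction. The cofinality clause --- if $\cf^{\mathcal P}(\delta)>\omega$ then $\cf^{\mathcal{P}[g]}(\kappa)>\omega$ --- is arranged by choosing the hull so that the supremum of its ordinals below $\delta$, which is exactly what collapses onto $\kappa$, retains $\mathcal{P}[g]$-cofinality reflecting $\cf^{\mathcal P}(\delta)$; this is precisely where the case hypothesis on $\delta$ ($\delta$ regular, or $\cf^{\mathcal P}(\delta)$ not measurable in $\mathcal P$) is used, to rule out the collapse threading an $\omega$-cofinal sequence through $\kappa$.

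\textbf{The third clause, $\power(\mathbb R)\cap L(M,\bigcup_{\alpha<\kappa}{}^\omega\alpha,\Gamma^*_g,\mathbb{R}^*_g)=\Gamma^*_g$ --- the main point.} The inclusion $\supseteq$ is immediate, $(\Gamma^*_g,\mathbb{R}^*_g)$ being a parameter. For $\subseteq$, fix $B\subset\mathbb{R}^*_g$ definable over $(M,\bigcup_{\alpha<\kappa}{}^\omega\alpha,\Gamma^*_g,\mathbb{R}^*_g)$ from ordinals and the displayed parameters; the goal is $B\in\Gamma^*_g$, i.e.\ that $B$ is Suslin and co-Suslin in the derived model. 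The mechanism is capturing through the $\Sigma$-mouse hierarchy over $\mathcal P$: using that $\mathrm{Code}(\Sigma)$ is universally Baire and that $\mathcal P$ is excellent, one produces a $\Sigma$-mouse $\mathcal N$ over a countable iterate of $\mathcal P$ with sufficiently many Woodin cardinals below the image of $\delta$ such that (i) the derived model of $\mathcal N$ computes $B$ via a term relation interpreted correctly and uniformly in every $\mathcal{P}[g\uphar\xi]$ for $\xi<\delta$, and (ii) every real in $\mathbb{R}^*_g$ --- and, crucially, every element of $\bigcup_{\alpha<\kappa}{}^\omega\alpha$, via the largest-cardinal structure of $M$ --- can be made generic over a further countable iterate of $\mathcal N$ by a genericity iteration. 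It then follows that $B$ is computed by that term relation both in $L(M,\bigcup_{\alpha<\kappa}{}^\omega\alpha,\Gamma^*_g,\mathbb{R}^*_g)$ and in the derived model, whence $B\in\Gamma^*_g$. I expect this to be the main obstacle: it requires the full strength of the hypothesis to run capturing uniformly through the symmetric collapse, and $\bigcup_{\alpha<\kappa}{}^\omega\alpha$ is not a set of reals, so absorbing it into the capturing --- rather than only absorbing reals, as in the classical derived-model argument --- is exactly the new ``covering'' content of \cite{CoveringChang}. Finally, for the fourth clause: given the third, $L(M,\bigcup_{\alpha<\kappa}{}^\omega\alpha,\Gamma^*_g,\mathbb{R}^*_g)$ and $L(\Gamma^*_g,\mathbb{R}^*_g)$ have the same reals and the same sets of reals; the latter satisfies $\mathsf{AD}^+$ by Woodin's derived model theorem, and since $\mathsf{AD}^+$ is a statement about $\power(\mathbb R)$, it transfers to the former.
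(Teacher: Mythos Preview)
The paper does not prove this theorem; it is quoted from \cite{CoveringChang} and used as a black box. So there is no ``paper's own proof'' to compare against here. That said, the Remark immediately following the statement tells you exactly what $M$ is supposed to be, and it is not what you propose: $M=\mathcal{M}_\infty$ is the direct limit of all non-dropping iterates of $\mathcal{P}\vert(\delta^+)^{\mathcal P}$ via normal trees of length $<\delta$ based on $\mathcal{P}\vert\delta$ and lying in some $\mathcal{P}[g\uphar\xi]$, and $\kappa=\delta_\infty$ is the image of $\delta$ under the direct limit map. Your candidate --- the transitive collapse of an $\omega$-closed elementary substructure, or ``the $\omega$-closure of $\mathcal{P}^-$'' --- is a different object, and there is no reason a bare Skolem hull should yield the third clause. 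The point is that $\mathcal{M}_\infty$ is itself a hod premouse with an internally definable fragment of its own iteration strategy; this is what drives the capturing argument in \cite{CoveringChang}, and a hull does not come equipped with that structure.

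Your sketch of the third clause is in roughly the right spirit (term capturing plus genericity iterations), but as written it is a description of a strategy rather than a proof: the sentence ``one produces a $\Sigma$-mouse $\mathcal N$ \ldots\ such that (i) \ldots\ and (ii) \ldots'' is exactly the content of the cited paper, and the step of absorbing elements of $\bigcup_{\alpha<\kappa}{}^\omega\alpha$ (rather than merely reals) into the capturing relies on the specific direct-limit presentation of $M$ and $\kappa$, not on a generic hull. Your handling of the fourth clause is correct and matches the footnote.
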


\begin{rem}
This remark is not necessary for our proof, but more is shown in \cite{CoveringChang}.
Let $(\mathcal{P}, \Sigma), \delta$ and $g$ be as above.
In $\mathcal{P}[g]$, we define $\mathcal{M}_{\infty}$ as the direct limit of all non-dropping iterates of $\mathcal{P}\vert(\delta^+)^{\mathcal{P}}$ via normal iteration trees of length $<\delta$ that are based on $\mathcal{P}\vert\delta$ and in $\mathcal{P}[g\uphar\xi]$ for some $\xi<\delta$.
Also let $\delta_{\infty}$ be the direct limit image of $\delta$ in $\mathcal{M}_{\infty}$.
Then the conclusion of \cref{Chang_over_DM} holds for $M=\mathcal{M}_{\infty}$ and $\kappa=\delta_{\infty}$.
When working in $\mathcal{P}[g]$, we write
\[
\mathsf{CDM}=L(\mathcal{M}_{\infty}, \cup_{\alpha<\delta_{\infty}}{}^{\omega}\alpha, \Gamma^*_g, \mathbb{R}^*_g).
\]
Here, $\mathsf{CDM}$ means ``the Chang model over the derived model.''

It is known that $\mathsf{CDM}$ satisfies stronger determinacy axiom.
Steel showed in \cite{comparison_mouse_pairs} that the derived model of an excellent least branch hod mouse at a limit of Woodin cardinals satisfies $\mathsf{AD}_{\mathbb{R}}$.
Furthermore, the authors showed in \cite{dm_of_self_it} that the derived model of a hod mouse at an inaccessible limit of Woodin cardinals satisfies ``$\mathsf{AD}_{\mathbb{R}}+\Theta$ is regular.''
Since $\mathsf{CDM}$ and the derived model have the same sets of reals, it follows that $\mathsf{CDM}$ is always a model of $\mathsf{AD}_{\mathbb{R}}$ and possibly a model of ``$\mathsf{AD}_{\mathbb{R}}+\Theta$ is regular.''
\end{rem}

Now we are ready to show the main theorem in this paper.

\begin{thm}\label{Det_in_Chang}
Suppose that $V$ is closed under sharps.
Let $(\mathcal{P}, \Sigma)$ be an excellent least branch hod pair such that
\begin{itemize}
    \item $\mathrm{Code}(\Sigma)$ is $<(2^{\aleph_0})^+$-universally Baire, and
    \item $\mathcal{P}\models\mathsf{ZFC}$ $+$ ``there is a Woodin cardinal that is a limit of Woodin cardinals.''
\end{itemize}
Then $\mathsf{CM}\models\mathsf{AD}^+$.
\end{thm}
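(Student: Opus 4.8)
The plan is to combine \cref{Chang_over_DM} with the contrapositive of \cref{Sigma^2_1-reflection}, after first reducing $\mathsf{AD}^+$ to $\mathsf{AD}$. Since $\mathsf{CM}$ is closed under countable sequences, it satisfies $\mathsf{DC}$, so $\power(\mathbb{R})^{\mathsf{CM}}$ is a set in $\mathsf{CM}$ and the inner model $N=L(\power(\mathbb{R}))^{\mathsf{CM}}$ has the same reals, the same sets of reals, and (by the prewellordering coding of surjections) the same $\Theta$ as $\mathsf{CM}$, while satisfying $\mathsf{DC}_{\mathbb{R}}+V=L(\power(\mathbb{R}))$. Hence if $\mathsf{CM}\models\mathsf{AD}$ then $N\models\mathsf{AD}$, so $N\models\mathsf{AD}^+$ by Woodin's theorem; every $\infty$-Borel code and every winning strategy for an ordinal game of length below $\Theta^{N}=\Theta^{\mathsf{CM}}$ witnessing $\mathsf{AD}^+$ in $N$ is coded by a set of reals, hence lies in $N\subseteq\mathsf{CM}$, and using $\mathsf{DC}$ in $\mathsf{CM}$ one checks that such strategies remain winning in $\mathsf{CM}$; so $\mathsf{CM}\models\mathsf{AD}^+$. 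Thus it suffices to prove $\mathsf{CM}\models\mathsf{AD}$, and since $\neg\mathsf{AD}$ is a $\Sigma^2_1$ sentence, \cref{Sigma^2_1-reflection} reduces this to producing a countable model of the right form whose $\mathrm{Col}(\omega,<\delta)$-collapse satisfies $\mathsf{AD}$.

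To set this up, I would pass to a countable elementary hull of $(\mathcal{P},\Sigma)$ if necessary---excellence and universal Baireness of the strategy code are preserved---so that $\mathcal{P}$ is countable. Let $\delta$ be a Woodin cardinal of $\mathcal{P}$ that is a limit of Woodin cardinals, witnessed by a subsequence $\vec{E}\in\mathcal{P}$ of the extender sequence of $\mathcal{P}$. Restricting $\Sigma$ to iteration trees built from $\vec{E}$ gives a coarse $\omega_1$-iteration strategy for $(\mathcal{P},\vec{E})$ whose code is definable from $\mathrm{Code}(\Sigma)$, hence $<(2^{\aleph_0})^+$-universally Baire; so $(\mathcal{P},\vec{E})$ meets the hypotheses of \cref{Sigma^2_1-reflection} (recall $V$ is closed under sharps). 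Since $\delta$ is inaccessible, hence regular, in $\mathcal{P}$, the cofinality hypothesis of \cref{Chang_over_DM} also holds. Fix a $\mathcal{P}$-generic $g\subset\mathrm{Col}(\omega,<\delta)$ in $V$.

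Now I claim $(\mathsf{CM}\uphar\omega_2)^{\mathcal{P}[g]}\models\mathsf{AD}$. By \cref{Chang_over_DM} applied in $\mathcal{P}$ to $\delta$ and $g$, the model $\mathsf{CDM}=L(\mathcal{M}_\infty,\bigcup_{\alpha<\delta_\infty}{}^{\omega}\alpha,\Gamma^*_g,\mathbb{R}^*_g)^{\mathcal{P}[g]}$ satisfies $\mathsf{AD}^+$, has $\mathsf{CDM}\cap\power(\mathbb{R})=\Gamma^*_g$, and has $\mathbb{R}^{\mathsf{CDM}}=\mathbb{R}^*_g=\mathbb{R}^{\mathcal{P}[g]}$. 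Because $\cf^{\mathcal{P}}(\delta)=\delta>\omega$, the cofinality clause gives $\cf^{\mathcal{P}[g]}(\delta_\infty)>\omega$, so every $\omega$-sequence of ordinals below $\delta_\infty$ is bounded and $\bigcup_{\alpha<\delta_\infty}{}^{\omega}\alpha=({}^{\omega}\delta_\infty)^{\mathcal{P}[g]}\in\mathsf{CDM}$. Since $\delta_\infty$ is the largest cardinal of $\mathcal{M}_\infty$ and $\mathcal{M}_\infty\cap\ord=\omega_2^{\mathcal{P}[g]}=(\delta^+)^{\mathcal{P}}$, there is for every $\beta$ with $\delta_\infty\le\beta<\omega_2^{\mathcal{P}[g]}$ a bijection of $\delta_\infty$ with $\beta$ in $\mathcal{M}_\infty$; applying these to $({}^{\omega}\delta_\infty)^{\mathcal{P}[g]}$ yields $({}^{\omega}\beta)^{\mathcal{P}[g]}\in\mathsf{CDM}$ for every $\beta<\omega_2^{\mathcal{P}[g]}$. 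Hence
\[
(\mathsf{CM}\uphar\omega_2)^{\mathcal{P}[g]}=\bigcup_{\beta<\omega_2^{\mathcal{P}[g]}}L\bigl(({}^{\omega}\beta)^{\mathcal{P}[g]}\bigr)\subseteq\mathsf{CDM},
\]
and the two models have the same reals, since $(\mathsf{CM}\uphar\omega_2)^{\mathcal{P}[g]}\supseteq L(\mathbb{R})^{\mathcal{P}[g]}$ contains every real of $\mathcal{P}[g]$ and $\mathbb{R}^{\mathcal{P}[g]}=\mathbb{R}^*_g$ because $\mathrm{Col}(\omega,<\delta)$ has the $\delta$-c.c. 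Therefore every set of reals of $(\mathsf{CM}\uphar\omega_2)^{\mathcal{P}[g]}$ lies in $\mathsf{CDM}$ and is determined there; the winning strategy is a real, hence lies back in $(\mathsf{CM}\uphar\omega_2)^{\mathcal{P}[g]}$ and remains winning. So $(\mathsf{CM}\uphar\omega_2)^{\mathcal{P}[g]}\models\mathsf{AD}$.

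Finally, if $\mathsf{CM}\models\neg\mathsf{AD}$, then since $\neg\mathsf{AD}$ is $\Sigma^2_1$, \cref{Sigma^2_1-reflection} applied to $(\mathcal{P},\vec{E})$, $\delta$, $g$ and the coarse strategy above gives $(\mathsf{CM}\uphar\omega_2)^{\mathcal{P}[g]}\models\neg\mathsf{AD}$, contradicting the previous step; so $\mathsf{CM}\models\mathsf{AD}$ and hence, by the first step, $\mathsf{CM}\models\mathsf{AD}^+$. I expect the main obstacle to lie in the structural comparison of the third step---reading off from \cref{Chang_over_DM} that the truncated Chang model of the collapse is contained in, and has the same reals as, the determinacy model $\mathsf{CDM}$---which relies on the precise conclusions of \cite{CoveringChang}, especially $\mathcal{M}_\infty\cap\ord=(\delta^+)^{\mathcal{P}}$ and the cofinality clause. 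The remaining ingredients---the $\mathsf{AD}^+$-to-$\mathsf{AD}$ reduction inside $\mathsf{CM}$ and the passage from $\Sigma$ to a universally Baire coarse strategy on $(\mathcal{P},\vec{E})$---are routine.
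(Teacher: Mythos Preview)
Your proof is correct and follows the same overall architecture as the paper's: verify that $(\mathsf{CM}\uphar\omega_2)^{\mathcal{P}[g]}$ sits inside the determinacy model of \cref{Chang_over_DM} with the same reals (your third step is exactly the paper's \cref{CDM_contains_CM}, using the bijections from $\kappa$ onto ordinals below $\omega_2$ together with $\cf(\kappa)>\omega$), and then contrapose \cref{Sigma^2_1-reflection} on the $\Sigma^2_1$ sentence $\neg\mathsf{AD}$.

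The one genuine difference is how you upgrade $\mathsf{AD}$ to $\mathsf{AD}^+$ in $\mathsf{CM}$. The paper does this by a second application of \cref{Sigma^2_1-reflection}: it first observes that $(\mathsf{CM}\uphar\omega_2)^{\mathcal{P}[g]}\models\mathsf{AD}^+$ (using that $\mathsf{AD}^+$ passes down from $\mathsf{CDM}$ to inner models with the same reals), and then uses the fact that under $\mathsf{AD}$ the failure of $\mathsf{AD}^+$ is equivalent to a $\Sigma^2_1$ sentence, so reflection applies again. Your route instead passes through $N=L(\power(\mathbb{R}))^{\mathsf{CM}}$, invokes Woodin's theorem that $\mathsf{AD}+\mathsf{DC}_{\mathbb{R}}+V=L(\power(\mathbb{R}))$ implies $\mathsf{AD}^+$, and then transfers the $\infty$-Borel codes and ordinal-game strategies back to $\mathsf{CM}$. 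That transfer is fine---the key point, which you gesture at with ``using $\mathsf{DC}$'', is that for $\lambda<\Theta$ one has $({}^\omega\lambda)^{\mathsf{CM}}=({}^\omega\lambda)^{N}$ via a surjection $\mathbb{R}\twoheadrightarrow\lambda$ in $N$ together with $\mathsf{AC}_\omega(\mathbb{R})$ in $\mathsf{CM}$---but the paper's approach is more economical: it stays entirely within the reflection machinery already set up and avoids quoting the $L(\power(\mathbb{R}))$ theorem. (A minor quibble: your clause ``it satisfies $\mathsf{DC}$, so $\power(\mathbb{R})^{\mathsf{CM}}$ is a set'' is a non sequitur; that $\power(\mathbb{R})$ is a set comes from Power Set, not $\mathsf{DC}$.)
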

\begin{proof}
Let $M$ be a model of $\mathsf{ZFC}-\mathsf{Power}$ in the conclusion of \cref{Chang_over_DM}.
Also, let $\kappa$ be the largest cardinal of $M$.

\begin{claim}\label{CDM_contains_CM}
In $\mathcal{P}[g]$,
$\mathsf{CM}\uphar\omega_2\subset L(M, \cup_{\alpha<\kappa}{}^{\omega}\alpha, \Gamma^*_g, \mathbb{R}^*_g)$.
\end{claim}
\begin{proof}
We argue in $\mathcal{P}[g]$.
Let $\alpha<\omega_2=\ord\cap M$ and let $x\in {}^{\omega}\alpha$.
We may assume that $\alpha\geq\kappa$.
As $\kappa$ is the largest cardinal in $M$, we can fix a bijection $f\colon \kappa\to\alpha$ in $M$.
Set $x_f:=\langle f^{-1}(x(n)) \mid n<\omega\rangle\in {}^{\omega}\kappa$.
Since $\kappa$ has uncountable cofinality, $x_f\in {}^{\omega}\xi$ for some $\xi<\kappa$.
Then $x_f$ is an element of $L(M, \cup_{\alpha<\kappa}{}^{\omega}\alpha, \Gamma^*_g, \mathbb{R}^*_g)$ and thus so is $x$.
\end{proof}

Since $(\mathsf{CM}\uphar\omega_2)^{\mathcal{P}[g]}$ contains all reals in $\mathcal{P}[g]$, it follows from \cref{Chang_over_DM} that $(\mathsf{CM}\uphar\omega_2)^{\mathcal{P}[g]}\models\mathsf{AD}^+$.
As the negation of $\mathsf{AD}$ is $\Sigma^2_1$, $\mathsf{CM}\models\mathsf{AD}$ by \cref{Sigma^2_1-reflection}.
Moreover, under $\mathsf{AD}$, the negation of $\mathsf{AD}^+$ is equivalent to a $\Sigma^2_1$ sentence.
So again, \cref{Sigma^2_1-reflection} implies that $\mathsf{CM}\models\mathsf{AD}^+$.
\end{proof}

\begin{cor}\label{Det_in_Chang_in_any_gen_ext}
Let $(\mathcal{P}, \Sigma)$ be an excellent least branch hod pair such that
\begin{itemize}
    \item $\mathrm{Code}(\Sigma)$ is universally Baire, and
    \item $\mathcal{P}\models\mathsf{ZFC}$ $+$ ``there is a Woodin cardinal that is a limit of Woodin cardinals.''
\end{itemize}
Then $\mathsf{CM}\models\mathsf{AD}^+$ in any set generic extensions.
\end{cor}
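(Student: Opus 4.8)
The plan is to transfer \cref{Det_in_Chang} into the given set generic extension. Fix a poset $\mathbb{P}\in V$ and a $V$-generic filter $h\subset\mathbb{P}$; we must show $(\mathsf{CM})^{V[h]}\models\mathsf{AD}^{+}$. Since \cref{Det_in_Chang} is a theorem of $\mathsf{ZFC}$ from its hypotheses and $V[h]\models\mathsf{ZFC}$, it suffices to verify inside $V[h]$ all of those hypotheses: that $V[h]$ is closed under sharps, and that there is an excellent least branch hod pair $(\mathcal{P},\Sigma^{h})$ with $\mathrm{Code}(\Sigma^{h})$ being $<(2^{\aleph_0})^{+}$-universally Baire in $V[h]$ and with $\mathcal{P}\models\mathsf{ZFC}+{}$``there is a Woodin cardinal that is a limit of Woodin cardinals.'' The last property is internal to $\mathcal{P}$ and survives unchanged.

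For the strategy, I would let $\Sigma^{h}$ be given by the canonical extension $\mathrm{Code}(\Sigma^{h})$ of $\mathrm{Code}(\Sigma)$ to $V[h]$ afforded by the universally Baire tree representations of $\mathrm{Code}(\Sigma)$ and its complement. Because $\mathrm{Code}(\Sigma)$ is \emph{fully} universally Baire in $V$, this extension is again fully universally Baire in $V[h]$: universal Baireness is preserved by set forcing, since any generic over $V[h]$ for a poset of size $<\kappa$ arises, together with $h$, from a generic over $V$ for an iterated poset, over which the chosen trees still complement each other. In particular $\mathrm{Code}(\Sigma^{h})$ is $<((2^{\aleph_0})^{V[h]})^{+}$-universally Baire, which is precisely why the corollary hypothesizes full universal Baireness rather than merely the $<(2^{\aleph_0})^{+}$ level that could degrade once the continuum is enlarged. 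The substantive point --- which I would isolate as a lemma, and which I expect to be the main obstacle --- is that $(\mathcal{P},\Sigma^{h})$ is again an \emph{excellent least branch hod pair} in $V[h]$: that $\Sigma^{h}$ is an $\omega_1^{V[h]}$-iteration strategy for $\mathcal{P}$ extending $\Sigma$ and still enjoys branch condensation together with the hull-condensation and coherence properties comprising excellence. This is a generic absoluteness statement for hod pairs with universally Baire strategies; it rests on the theory of least branch mouse pairs (cf.\ \cite{comparison_mouse_pairs}), the point being that the defining clauses of such a pair are, relative to the tree representations of $\mathrm{Code}(\Sigma)$, expressed by statements that persist into $V[h]$.

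It remains to see that $V[h]$ is closed under sharps. This is preserved by set forcing: if $\mathbb{Q}\in L[z]$ and $z^{\#}$ exists, then a Silver indiscernible $\kappa$ of $L[z]$ with $\kappa>\lvert\mathbb{Q}\rvert$ gives an embedding $L[z]\to L[z]$ with critical point $\kappa$ which, fixing $\mathbb{Q}$ pointwise, lifts to a nontrivial elementary map $L[z,k]\to L[z,k]$ for any $\mathbb{Q}$-generic $k$, whence $(z,k)^{\#}$ exists; since moreover $A^{\#}$ exists whenever $A\in L[B]$ and $B^{\#}$ exists, letting $z$ code $\mathbb{Q}$ and a name for an arbitrary set of $V[k]$ shows $V[k]$ closed under sharps. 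So it is enough that $V$ be closed under sharps, and for this I would use the hod pair: given a set $x\in V$, code it as a set of ordinals and run a genericity iteration to a non-dropping $\Sigma$-iterate $\mathcal{P}^{*}$ of $\mathcal{P}$ over which $x$ is generic for Woodin's extender algebra at the image $\delta^{*}$ of a Woodin cardinal of $\mathcal{P}$, with the code of $x$ below $\delta^{*}$. Since $\mathcal{P}^{*}$ has a proper class of Woodin cardinals below $\delta^{*}$, it has a measurable cardinal above the (small) extender algebra poset, so $\mathcal{P}^{*}[x]\supseteq L[x]$ computes $x^{\#}$ by Kunen's theorem and $x^{\#}\in V$.

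With every hypothesis of \cref{Det_in_Chang} verified in $V[h]$, applying that theorem there yields $(\mathsf{CM})^{V[h]}\models\mathsf{AD}^{+}$; since $\mathbb{P}$ and $h$ were arbitrary, $\mathsf{CM}$ satisfies $\mathsf{AD}^{+}$ in every set generic extension. Apart from the generic absoluteness of ``excellent least branch hod pair'' isolated above, the remaining steps are routine preservation facts for universally Baire sets and for closure under sharps.
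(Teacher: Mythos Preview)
Your proposal is correct and follows the same route as the paper: derive closure under sharps from full iterability of $\mathcal{P}$, pass to the canonical extension $\Sigma^{h}$ of the universally Baire strategy in $V[h]$, note that $(\mathcal{P},\Sigma^{h})$ remains an excellent least branch hod pair with suitably universally Baire code, and apply \cref{Det_in_Chang} inside $V[h]$. The paper's proof is simply a terser version of yours (it omits your explicit sharps-preservation argument and instead relies on the same iterability reasoning applied directly in $V[h]$); your one imprecise phrase, ``a proper class of Woodin cardinals below $\delta^{*}$,'' should read ``unboundedly many Woodin cardinals below $\delta^{*}$,'' but the intended argument---a measurable above the small extender-algebra Woodin---goes through.
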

\begin{proof}
Since $\mathcal{P}$ has a universally Baire iteration strategy, it is fully iterable.
It follows that $V$ is closed under sharps.
Then the corollary follows from \cref{Det_in_Chang} and the observation that in any set generic extensions $V[g]$, letting $\Sigma^g$ be the canonical extension of $\Sigma$ in $V[g]$, $(\mathcal{P}, \Sigma^g)$ is still an excellent least branch hod pair.
\end{proof}

Mitchell's result from \cite{small_sharp_for_Chang} that \cref{Det_in_Chang} cannot be strengthened to $\mathsf{CM}\models\mathsf{AD}_{\mathbb{R}}$.
This is because $\mathsf{AD}_{\mathbb{R}}$ implies that Mitchell's $\mathbb{R}$-mouse exists, and hence it is not in $\mathsf{CM}$.
In fact, it follows from Mitchell's theorem that the theory $\mathsf{AD}^{+} +\theta_0<\Theta$ cannot hold in $\mathsf{CM}$.
However, Mitchell's proofs require knowledge of the sophisticated forcing machinery developed by Gitik. Here we state a simpler albeit weaker result showing that $\mathsf{CM}$ cannot be too large.

\begin{thm}
Under the same assumption of \cref{Det_in_Chang}, $\Sigma\notin\mathsf{CM}$.
\end{thm}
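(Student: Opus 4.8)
The plan is to assume $\Sigma\in\mathsf{CM}$ and derive a contradiction by applying the $\Sigma^2_1$ reflection of \cref{Sigma^2_1-reflection} one more time, this time to a sentence asserting the existence of an $\omega_1$-iterable hod premouse with a Woodin limit of Woodins; reflection will place such an object inside a derived model of $\mathcal{P}$ that provably cannot contain it. So suppose $\Sigma\in\mathsf{CM}$, hence $\mathrm{Code}(\Sigma)\in\power(\mathbb{R})^{\mathsf{CM}}$; by \cref{Det_in_Chang} also $\mathsf{CM}\models\mathsf{AD}^+$. Let $\psi$ say ``there are a set of reals $A$ and a real coding a least branch hod premouse $\mathcal{N}$ with $\mathcal{N}\models\mathsf{ZFC}$ together with `there is a Woodin cardinal that is a limit of Woodin cardinals', such that $A$ codes an $\omega_1$-iteration strategy for $\mathcal{N}$.'' I would first note that $\psi$ is $\Sigma^2_1$: after the quantifiers over $A$ and over the real coding $\mathcal{N}$, the matrix quantifies only over reals, since ``$\mathcal{N}\models\mathsf{ZFC}+\dots$'' is arithmetic in the real coding $\mathcal{N}$, and ``$A$ codes an $\omega_1$-iteration strategy for $\mathcal{N}$'' unfolds to ``every countable iteration tree on $\mathcal{N}$ consistent with $A$ has wellfounded models and $A$ selects cofinal wellfounded branches at its limit stages,'' which is $\Pi^1_2$ in $A$ and $\mathcal{N}$. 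As $(\mathcal{P},\Sigma)$ witnesses $\psi$, we have $\mathsf{CM}\models\psi$.

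Next I would run the reflection. Let $\delta$ be the \emph{least} Woodin cardinal of $\mathcal{P}$ that is a limit of Woodin cardinals, $\vec{E}$ the part of the extender sequence of $\mathcal{P}$ witnessing this, and note that $(\mathcal{P},\vec{E})$, $\delta$, and (the restriction of) $\Sigma$ meet the hypotheses of \cref{Sigma^2_1-reflection}: $\mathcal{P}$ is a countable transitive model of $\mathsf{ZFC}$ (being coded by a real), $V$ is closed under sharps, and $\mathrm{Code}(\Sigma)$ is $<(2^{\aleph_0})^+$-universally Baire by the hypotheses of \cref{Det_in_Chang}. Fixing $\mathcal{P}$-generic $g\subset\mathrm{Col}(\omega,<\delta)$, \cref{Sigma^2_1-reflection} yields $(\mathsf{CM}\uphar\omega_2)^{\mathcal{P}[g]}\models\psi$; fix witnesses $A$ and $\mathcal{N}$ there. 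Since $\delta$ is regular, \cref{Chang_over_DM} applies to $(\mathcal{P},\Sigma)$ and $\delta$; together with \cref{CDM_contains_CM} we get $(\mathsf{CM}\uphar\omega_2)^{\mathcal{P}[g]}\subset L(M,\cup_{\alpha<\kappa}{}^{\omega}\alpha,\Gamma^*_g,\mathbb{R}^*_g)$ for the $M$ and largest cardinal $\kappa$ supplied by \cref{Chang_over_DM}, and that model has $\power(\mathbb{R})=\Gamma^*_g$. Also $\mathbb{R}^{(\mathsf{CM}\uphar\omega_2)^{\mathcal{P}[g]}}=\mathbb{R}^{\mathcal{P}[g]}=\mathbb{R}^*_g$, because $\mathrm{Col}(\omega,<\delta)$ is $\delta$-c.c.\ in $\mathcal{P}$. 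Hence $A\in\Gamma^*_g$ and $\mathcal{N}$ is coded by a real of $\mathbb{R}^*_g$; and since the matrix of $\psi$ is $\Pi^1_2$ in $A$ and $\mathcal{N}$, it is absolute between transitive models with the same reals, so the derived model $D:=L(\Gamma^*_g,\mathbb{R}^*_g)$ of $\mathcal{P}$ at $\delta$, which contains $A$ and $\mathcal{N}$ and has reals $\mathbb{R}^*_g$, satisfies $\psi$ as well.

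The contradiction is that $D\not\models\psi$, and verifying this is the step I expect to be the main obstacle. Since $\delta$ is the \emph{least} Woodin limit of Woodins of $\mathcal{P}$, the hod premouse $\mathcal{P}\uphar\delta$ and all of its non-dropping iterates have no Woodin cardinal that is a limit of Woodin cardinals. By the comparison and capturing theory for least branch hod pairs (\cite{comparison_mouse_pairs}, or as extracted from the construction of the derived model in \cite{CoveringChang}), any least branch hod premouse carrying an $\omega_1$-iteration strategy whose code lies in the derived model $D$ of $\mathcal{P}$ at $\delta$ is captured by the hod pair hierarchy of $\mathcal{P}\uphar\delta$, so its non-dropping iterates embed into iterates of $\mathcal{P}\uphar\delta$ and hence it cannot have a Woodin limit of Woodins; equivalently, the strategy of a least branch hod pair whose base model has a Woodin limit of Woodins is Wadge strictly above $\mathrm{Code}(\Sigma\uphar(\mathcal{P}\uphar\delta))$, which already lies outside $D$. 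Thus $D\not\models\psi$, a contradiction, so $\Sigma\notin\mathsf{CM}$. Everything except this last step is the soft reflection argument of \cref{Sigma^2_1-reflection} applied once more; the crux is the purely inner-model-theoretic identification of which iteration strategies can occur in the derived model of $\mathcal{P}$ at its least Woodin limit of Woodins.
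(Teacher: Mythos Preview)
Your route is genuinely different from the paper's, and the gap is precisely where you flag it. The paper does \emph{not} apply \cref{Sigma^2_1-reflection} as a black box; it re-runs the proof so as to keep track of the specific set $\mathrm{Code}(\Sigma)$ through the genericity iteration. This produces an iterate $i\colon\mathcal{P}\to\mathcal{Q}$ and a $\mathcal{Q}$-generic $h\subset\mathrm{Col}(\omega,<i(\delta))$ with $\Sigma\uphar\mathsf{HC}^{\mathcal{Q}[h]}\in(\mathsf{CM}\uphar\omega_2)^{\mathcal{Q}[h]}$, and then \cref{CDM_contains_CM} and \cref{Chang_over_DM} (applied at $\mathcal{Q}$) give $\mathrm{Code}(\Sigma\uphar\mathsf{HC}^{\mathcal{Q}[h]})\in\Gamma^*_h$, i.e.\ $<i(\delta)$-universally Baire in $\mathcal{Q}[h]$. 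From the trees witnessing this one finds a regular $\eta<i(\delta)$ in the range of $i$ with $\Sigma\uphar V^{\mathcal{Q}[h\uphar\eta]}_{i(\delta)}\in\mathcal{Q}[h\uphar\eta]$. But $\Sigma$ computes the very tree $\mathcal{T}$ that built $\mathcal{Q}$, so an initial segment $\mathcal{T}\uphar\xi{+}1$ lies in $\mathcal{Q}[h\uphar\eta]$ and witnesses that $\eta=\sup i^{\mathcal{T}}_{\xi,\infty}[\bar\eta]$ has countable cofinality there, hence in $\mathcal{Q}$, contradicting regularity. No comparison, capturing, or minimality of $\delta$ is used.

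In your argument, by contrast, the witness $(\mathcal{N},A)$ obtained in $(\mathsf{CM}\uphar\omega_2)^{\mathcal{P}[g]}$ need have nothing to do with $(\mathcal{P},\Sigma)$: your $\psi$ only asserts that $A$ is \emph{an} $\omega_1$-iteration strategy for $\mathcal{N}$, with none of the regularity properties (strong hull condensation, normalizing well, pushforward consistency, excellence) that Steel's comparison theorem for mouse pairs in \cite{comparison_mouse_pairs} requires of both inputs. Without these there is no mechanism forcing $(\mathcal{N},A)$ to line up with the hod pair hierarchy below $\mathcal{P}\uphar\delta$, so the step ``$\mathcal{N}$ embeds into an iterate of $\mathcal{P}\uphar\delta$'' is unjustified. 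Strengthening $\psi$ to demand that $(\mathcal{N},A)$ be an excellent hod pair creates two new obligations you have not discharged: that the strengthened sentence is still $\Sigma^2_1$, and that $(\mathcal{P},\Sigma)$ still witnesses it \emph{inside} $\mathsf{CM}$ (the relevant condensation and pullback properties of $\Sigma$ are verified in $V$, and their downward absoluteness to $\mathsf{CM}$ is not automatic). The paper's self-reference argument sidesteps all of this precisely because it never forgets which strategy it is carrying.
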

\begin{proof}
Suppose that $\Sigma\in\mathsf{CM}$.
For the same reason as before, we may assume that $\mathsf{CH}$ holds.
Then the argument for the proof of \cref{Sigma^2_1-reflection} shows that there are
\begin{itemize}
\item an iteration map $i\colon \mathcal{P}\to \mathcal{Q}$ via a normal iteration tree $\mathcal{T}$ on $\mathcal{P}$ of countable length according to $\Sigma$,
\item ordinals $\alpha, \beta<(i(\delta)^+)^{\mathcal{Q}}$, and
\item $\mathcal{Q}$-generic $h\subset\mathrm{Col}(\omega, <i(\delta))$
\end{itemize}
such that $\Sigma\uphar\mathsf{HC}^{\mathcal{Q}[h]}\in (L_{\beta}({}^{\omega}{\alpha}))^{\mathcal{Q}[h]}$.
Then in $\mathcal{Q}[h]$, \cref{CDM_contains_CM} and \cref{Chang_over_DM} implies that $\mathrm{Code}(\Sigma\uphar\mathsf{HC}^{\mathcal{Q}[h]})$ is $<i(\delta)$-universally Baire.

Now we can get a contradiction in a rather standard way in inner model theory.
Let $(T, U)\in\mathcal{Q}[h]$ be a pair of trees on $\omega\times\ord$ witnessing that $\mathrm{Code}(\Sigma\uphar\mathsf{HC}^{\mathcal{Q}[h]})$ is $<i(\delta)$-universally Baire.
Let $\gamma<i(\delta)$ be such that $(T, U)\in\mathcal{Q}[h\uphar\gamma]$.
Then for any ordinal $\eta$ such that $\gamma<\eta<i(\delta)$,
\[
\Sigma\uphar V^{\mathcal{Q}[h\uphar\eta]}_{i(\delta)}\in\mathcal{Q}[h\uphar\eta].
\]
Now pick $\eta$ as the least regular cardinal of $\mathcal{Q}$ above $\gamma$ in the range of $i\colon\mathcal{P}\to\mathcal{Q}$.
This is possible because $i(\delta)=\sup i[\delta]$.
Let $\xi$ be the least ordinal in the last branch of $\mathcal{T}$ such that the critical point of the iteration map $i^{\mathcal{T}}_{\xi, \infty}\colon \mathcal{M}^{\mathcal{T}}_{\xi}\to\mathcal{Q}$ is at least $\eta$.
Note that $\crit(i^{\mathcal{T}}_{\xi, \infty})>\eta$ since $\eta$ is in the range of $i$.
Let $\overline{\eta}:=(i^{\mathcal{T}}_{\xi, \infty})^{-1}(\eta)=i^{-1}(\eta)$.
Then $\overline{\eta}$ is regular in $\mathcal{P}$ and thus $\eta=\sup i^{\mathcal{T}}_{\xi, \infty}[\overline{\eta}]$.
Since $\Sigma\uphar V^{\mathcal{Q}[h\uphar\eta]}_{i(\delta)}\in\mathcal{Q}[h\uphar\eta]$, we have $\mathcal{T}\uphar\xi+1\in\mathcal{Q}[h\uphar\eta]$ and $\overline{\eta}$ is countable in $\mathcal{Q}[h\uphar\eta]$.
It follows that $\eta$ has countable cofinality in $\mathcal{Q}[h\uphar\eta]$.
Then $\eta$ has countable cofinality in $\mathcal{Q}$ too, which contradicts the regularity of $\eta$ in $\mathcal{Q}$.
\end{proof}

Many questions remain open. Can the generic invariance of the theory of $\textsf{CM}$ be proved from the hypothesis of \cref{Det_in_Chang}? 
Woodin introduced various kinds of generalized Chang models in \cite{DetGen}.
One of such examples is the Chang model augmented with the club filters on $\power_{\omega_1}({}^{\omega}\alpha)$ for all ordinals $\alpha$.
Woodin showed that this version of generalized Chang model satisfies $\mathsf{AD}^+ +$ ``$\omega_1$ is supercompact,'' assuming the existence of a proper class of Woodin limits of Woodin cardinals.\footnote{This result is also mentioned in \cite{spct_of_omega_1}.}
Is it possible to prove Woodin's results by the methods of \cref{Det_in_Chang}? We will address such questions in future publications. 

\bibliographystyle{amsalpha.bst}
%%%\bibliography{myref}

\begin{thebibliography}{Mit17}

\bibitem[Far20]{Farah}
Ilijas Farah, \emph{The extender algebra and ${\Sigma^2_1}$-absoluteness},
  Large {C}ardinals, {D}eterminacy, and {O}ther {T}opics. {T}he {C}abal
  {S}eminar. {V}ol. {IV}, Lect. Notes Log., vol.~49, Cambridge University
  Press, Cambridge, 2020, pp.~141--176.

\bibitem[GS22]{dm_of_self_it}
Takehiko Gappo and Grigor Sargsyan, \emph{On the derived models of
  self-iterable universes}, Proc. Amer. Math. Soc. \textbf{150} (2022), no.~3,
  1321--1329. \MR{4375724}

\bibitem[IT21]{spct_of_omega_1}
Daisuke Ikegami and Nam Trang, \emph{On supercompactness of {$\omega_1$}},
  Advances in mathematical logic, Springer Proc. Math. Stat., vol. 369,
  Springer, Singapore, 2021, pp.~27--45. \MR{4378920}

\bibitem[Lar04]{StationaryTower}
Paul~B. Larson, \emph{The stationary tower}, University Lecture Series,
  vol.~32, American Mathematical Society, Providence, RI, 2004, Notes on a
  course by W. Hugh Woodin. \MR{2069032}

\bibitem[Mit17]{small_sharp_for_Chang}
William~J. Mitchell, \emph{The sharp for the {C}hang model is small}, Arch.
  Math. Logic \textbf{56} (2017), no.~7-8, 935--982. \MR{3696073}

\bibitem[Sar21]{CoveringChang}
Grigor Sargsyan, \emph{Covering with {C}hang models over derived models}, Adv.
  Math. \textbf{384} (2021), Paper No. 107717, 21. \MR{4237419}

\bibitem[Ste09]{derived_model_theorem}
J.~R. Steel, \emph{The derived model theorem}, Logic {C}olloquium 2006, Lect.
  Notes Log., vol.~32, Assoc. Symbol. Logic, Chicago, IL, 2009, pp.~280--327.
  \MR{2562557}

\bibitem[Ste23]{comparison_mouse_pairs}
John~R. Steel, \emph{A comparison process for mouse pairs}, Lecture Notes in
  Logic, vol.~51, Association for Symbolic Logic, Ithaca, NY; Cambridge
  University Press, Cambridge, \copyright 2023. \MR{4496338}

\bibitem[Woo]{DetGen}
W.~Hugh Woodin, \emph{Determinacy and generic absoluteness}, To appear.

\end{thebibliography}
\providecommand{\bysame}{\leavevmode\hbox to3em{\hrulefill}\thinspace}
\providecommand{\MR}{\relax\ifhmode\unskip\space\fi MR }
% \MRhref is called by the amsart/book/proc definition of \MR.
\providecommand{\MRhref}[2]{%
  \href{http://www.ams.org/mathscinet-getitem?mr=#1}{#2}
}
\providecommand{\href}[2]{#2}

\end{document}